\newtheorem{theorem}{THEOREM}[section]
\newtheorem{lemma}[theorem]{LEMMA}
\newtheorem{corollary}[theorem]{COROLLARY}
\newtheorem{proposition}[theorem]{PROPOSITION}
\newtheorem{remark}[theorem]{REMARK}
\newtheorem{definition}[theorem]{DEFINITION}
\let\orgdescriptionlabel\descriptionlabel
\renewcommand*{\descriptionlabel}[1]{%
  \let\orglabel\label
  \let\label\@gobble
  \phantomsection
  \edef\@currentlabel{#1}%
  \let\label\orglabel
  \orgdescriptionlabel{#1}%
}
 \def\set#1{{\{ #1\}}}
 \def\lr#1{{\langle #1 \rangle}}
\markboth{\today}{\today}
  \def\nb#1{}
  \def\nodes{{\sf nodes}}
   \def\c#1{{\mathcal #1}}
\markboth{\today}{\today}
 \def\y{{\sf y}}
 \def\g{{\sf g}}
 \def\r{{\sf r}}
 \def\bb{{\sf b}}
 \def\w{{\sf w}}
 \def\conv{\smile}
\DeclareMathOperator{\comp}{;}
 \def\G{{\bf G}}
 \def\ef{Ehrenfeucht Fra\"iss\'e}
\title{First-order axiomatisations of representable relation algebras need formulas of unbounded quantifier depth}
\author{Rob Egrot} 
\author{Robin Hirsch}
\subjclass[2020]{Primary 	03G15; Secondary 05C90}
\keywords{RRA, Representable relation algebras, Finite variable axiomatisation, Bounded quantifier depth axiomatisation}
\begin{document}

\begin{abstract}
Using a variation of the rainbow construction and various pebble and colouring games, we prove that RRA, the class of all \emph{representable relation algebras},  cannot be axiomatised by any first-order  relation algebra theory of bounded quantifier depth.    We also prove that the class At(RRA) of atom structures of representable, atomic  relation algebras cannot be defined by any set of sentences in the language of RA atom structures that uses only a finite number of variables.

\end{abstract}
\maketitle

\section{Introduction}   
Relation algebras were introduced by Tarski and various coworkers during the 1940s, with the modern definition of a relation algebra being given in \cite{JT48, ChiTar51}. Tarski provided some motivation for this project in the earlier \cite{T41}, where he noted that though the theory of binary relations was by this time of `universally recognized' significance, it was then no more developed than it had been at the end of the 19th century. Much more on the historical development of the calculus of relations and relation algebras can be found in \cite{Ma91}. Relation algebras are of theoretical interest as they provide an elegant formalism for the calculus of relations, which is itself an adequate foundational framework for set theory, and thus for mathematics itself (see \cite{TG87} for the extensive details, and \cite{Giv88,Giv06} for readable summaries). Relation algebraic methods have also been useful in proving metamathematical results, for example that for all $n\geq3$ there are sentences involving only 3 variables whose formal proofs require $n$ variables \cite{HHM98:jsl}. In addition, relation algebras and their generalisations have numerous practical applications in computer science, for example in verification \cite{Gut18}, computation tasks involving finite topologies \cite{BW17}, and navigation of XML documents \cite{FGPVGW16}, to name just a few.

A relation algebra as per Tarski's formulation is an algebraic structure axiomatised by a certain finite set of equations (see the next section for details). In other words, the class of relation algebras is a finitely based variety, which is denoted RA. As discussed above, RA emerged as an attempt to capture properties of binary relations, so it is natural to ask whether this attempt was successful. The answer turns out to be `not entirely', because there are algebras in RA which do not correspond to concrete systems of binary relations \cite{Ly50}. These algebras are said to not be \emph{representable}, and we provide a formal account of what exactly this means in the next section. 

So Tarski's axiomatisation of RA was in a sense a failure, but it also turns out to be a remarkable success, for the following reason. Relation algebra equations correspond exactly with first-order statements about binary relations that can be stated using at most 3 variables (see \cite[theorems 3.9(viii)(ix)]{TG87} for a proof), and, moreover, a relation algebra equation is valid in RA if and only if the corresponding first-order sentence is \emph{provable} (in classical proof systems) using at most 4 variables \cite[theorem 24]{Ma89}. So Tarski's axioms neatly capture an intuitively meaningful fragment of the first-order theory of binary relations. Furthermore, an axiomatisation such that the `4' in the statement above could be replaced by `5' would require an infinite number of additional axioms (this result seems to lack a precise statement in the literature, but it can be pieced together from the material in e.g. \cite[section 6]{HH:raca2}).   

Nevertheless, from the perspective of capturing the true properties of binary relations, the class of main interest is that of the representable relation algebras (RRA). Unfortunately, RRA is a difficult class to axiomatise. While it is a variety \cite{T55}, and is even definable by a recursively enumerable equational theory (see  \cite[Theorem~8.4]{HH:book} for an example of such an equational theory), it is known that no finite set of relation algebra formulas can define it \cite{Mon64}. Indeed, no set (even  infinite)  of equations with finitely many variables can define it \cite[theorem~3.5.6]{Jon88}, and any axiomatisation of RRA must involve infinitely many non-canonical formulas \cite{HVCan}. 

In this paper, we extend  Monk and J\'onsson's  negative results by showing that there is no upper bound on the depth of quantifiers needed in an axiomatisation of RRA. The strategy of the paper is based on the well known fact that an atomic relation algebra can be defined from its set of atoms by describing how they interact with each other (this is known as providing an \emph{atom structure}). Since complicated relation algebras can be generated from relatively simple atom structures, this is often an efficient way to construct relation algebras with desirable properties.  

Here our goals are achieved by constructing, for each $n<\omega$, two relation algebras, $\c A_n$ and $\c B_n$,  with one representable but not the other, that cannot be distinguished by any relation algebra formula of quantifier-depth at most $n$. For this construction we use a novel variation of the `rainbow construction' introduced in \cite{Hi94c} (another version of this construction is discussed in \cite[chapter 16]{HH:book}).  Background on atom structures can be found in the next section, and the definition of the rainbow construction to be used here is given in section \ref{S:rainbow}.  

Another advantage of atom structures is that games played on what are known as \emph{atomic networks} can be used to show the associated atomic relation algebra is completely representable. We take advantage of this in section~\ref{S:rainbow} to describe the circumstances in which the algebra constructed from the modified rainbow construction is representable (theorem \ref{thm:rainbow}). As an immediate corollary to this we are able to prove that the class of atom structures of representable atomic relation algebras cannot be defined by any set of sentences in the language of RA atom structures that uses only a finite set of variables (corollary \ref{C:atoms}).

Later, in section \ref{S:axioms}, for $n\in\omega$ we find simple sufficient conditions for two algebras constructed using our rainbow construction to be equivalent with respect to first-order sentences of quantifier depth $n$. The methods here also use games. First a straightforward adaptation of the well known Ehrenfeucht-Fra\"iss\'e pebble game to relation algebras, and also a new colouring game which we call the \emph{Seurat game for sets}, as it is a special case of the Seurat games for directed graphs and binary relational structures introduced in \cite{EgrHirgames,EgrHirNote}. 

As mentioned previously, the main result (theorem \ref{thm:main}) is then obtained simply by, for each $n\in \omega$, describing the algebras $\c A_n$ and $\c B_n$. The definition of the modified rainbow construction together with the characterisation of representability provided by \ref{thm:rainbow} and the sufficient condition for quantifier depth $n$ equivalence provided by corollary \ref{C:K-equiv} make checking that the algebras have the required properties straightforward.

We had originally hoped to include in this paper a proof that no theory involving only finitely many variables (but with potentially unbounded quantifier depth) could define RRA.    An obvious approach would again be to construct (for $n<\omega$) two relation algebras, one in RRA one not, that cannot be distinguished by any $n$-variable formula.  However, many relation algebra properties may be expressed with a finitely bounded number of variables. For example, with just two reusable variables we can write a formula that  holds precisely on those relation algebras that have a given finite cardinality (see the discussion at the end of section \ref{S:rainbow}). The algebras $\c A_n, \c B_n$ used in this paper for the quantifier depth result as described above have different finite cardinalities, and so they are distinguishable with just two variables, making them unsuitable for the task. 

It remains unknown whether a finite variable axiomatisation of RRA exists.    A suggestion for how to approach and perhaps solve this problem was given  in  \cite[Problem 1]{HH:book}, however the suggested approach was faulty. An early draft of this paper contained a correction to the strategy from \cite{HH:book}. This correction involves a more complex rainbow construction, a version of the Seurat game for general binary relational structures, and a significantly more complicated version of theorem \ref{thm:rainbow}. Since the additional technical overhead obscures the argument for the main results here, and since there are independent reasons to believe the corrected strategy might be very difficult, if not impossible, we have moved this material and more detailed discussion to \cite{EgrHirNote}.

\section{Technical background}
\subsection{Relation Algebra, Representation, Complete Representation}\label{S:reps}
A \emph{relation algebra} $\c A=(A, 0, 1, +, -, 1', {}^\conv, ;)$ consists of a set $A$ with elements $0, 1, 1'$, unary functions $-, {}^\smile$ and binary functions $+, ;$ over $A$, such that
\begin{enumerate}[(i)]
\item $(A, 0, 1, +, -)$  is a boolean algebra, 
\item $(A, 1', {}^\smile, ;)$ is a convoluted monoid,  
\item ${}^\smile$  and $;$ are normal operators, and
\item   the Piercean law $a^\conv;(-(a;b))\leq -b$ holds.  
\end{enumerate}
Since these axioms are all equations, the class ${\bf RA}$ of all relation algebras is a finitely based equational variety.   

Let $E$ be any equivalence relation over base $X$.  The \emph{proper relation algebra} $\c P(E)=(\wp(E), \emptyset, E, \cup, \setminus_E, Id_X, {}^\smile, ;)$ consists  of all subsets of $E$, with the identity $Id_X=\set{(x, x):x\in X}$, converse defined by $a^\conv=\set{(y, x):(x, y)\in a}$, and composition defined by $a;b=\set{(x, y): \exists z\; ((x, z)\in a\wedge (z, y)\in b}$, where $a, b \subseteq E$.   The axioms of relation algebra are  mostly easy to verify in a proper relation algebra, perhaps the Peircean law would take slightly longer.

An embedding $\theta:\c A\rightarrow\c P(E)$ from an abstract relation algebra $\c A$ into a proper relation algebra $\c P(E)$ is called a \emph{representation} of $\c A$.  For $a\in\c A$ we write $a^\theta$ for the image of $a$ under $\theta$, which is a binary relation.   The class of all representable relation algebras (RRA) turns out to be a proper subclass of RA (not every relation algebra is representable).    If a representation $\theta$ preserves arbitrary suprema wherever they exist it is called a \emph{complete representation}.  For finite relation algebras, every representation is trivially complete, as there are no non-finite suprema, but this is not true in general. 

For each $(x, y)\in 1^\theta$, the set $\set{a\in\c A: (x, y)\in a^\theta}$ is an ultrafilter of the boolean part of $\c A$.    An ultrafilter is \emph{principal} if it includes its infimum, and is \emph{non-principal} otherwise.  In the former case the infimum must be an \emph{atom} (minimal non-zero element of $\c A$), in the latter case the infimum can be shown to be zero.  A representation $\theta$ is \emph{atomic} if for all $(x, y)\in 1^\theta$  there is an atom $a$ such that $(x, y)\in a^\theta$.  By \cite[theorem 2.21]{HH:book}  a representation is complete if and only if it is atomic.     So, if $\theta$ is a complete representation and $(x, y)\in 1^\theta$, there is a (necessarily unique) atom $a$ such that $(x, y)\in a^\theta$. We denote this atom by $(x, y)_\theta$, so $(x, y)\in ((x,y)_\theta)^\theta$, for $(x, y)\in 1^\theta$.

\subsection{Atom structure, Peircean Transforms}
A boolean algebra (possibly with additional operations, e.g. a relation algebra) is \emph{atomic} if every non-zero element is above an atom. This is equivalent to saying that every element is the supremum of the set of atoms below it.  Every finite boolean algebra is atomic.  Using the relation algebra axioms, it can be shown that that the operators ${}^\smile, ;$ are \emph{completely additive}, i.e. if the supremum $\bigvee S$ of a subset $S$ of a relation algebra exists then $(\bigvee S)^\conv$ is the supremum of the set  $\set{s^\conv:s \in S}$,   \/ and for any $a$ in the algebra, $a;\bigvee S$ is the supremum of $\set{a;s: s\in S}$ and $\bigvee S;a$ is the supremum of $\set{s;a:s\in S}$ (see e.g. \cite[p109]{HH:book} for a proof). 

It follows easily from completeness of $^\smile$ and relation algebra condition (ii) that $x^\smile$ is an atom if and only if $x$ is.  Hence, for an atomic relation algebra $\c A$, each operator is determined by its restriction to atoms.  This information is conveyed by its \emph{atom structure}, which consists of the set of atoms, the set of atoms below the identity, the set of pairs $(a, a^\conv)$ where $a$ ranges over atoms, and the set of  \emph{consistent} triples of atoms $(a, b, c)$ where $a;b\geq c$. [Beware, \cite{HH:book} uses the consistency condition $a;b\geq c^\conv$, so $(a, b, c)$ is consistent according to our definition here if and only if $(a, b, c^\conv)$ is consistent by the \cite{HH:book} definition.] 
 Instead of  giving the set of consistent triples of atoms it is often convenient to specify the 
\emph{forbidden} triples of atoms $(a, b, c)$ --- those where $a;b\;\cdot\; c=0$.  

A consequence of the relation algebra axioms, in particular the Peircean law, is that if  $(a, b, c)$ is a forbidden triple of atoms then all six so-called \emph{Peircean transforms} of $(a, b, c)$,  i.e.  
\[(a, b, c),(a^\conv, c, b), (c, b^\conv, a), (b, c^\conv, a^\conv), (c^\conv, a, b^\conv), (b^\conv, a^\conv, c^\conv),\]
are also are forbidden.

Every atomic relation algebra embeds into the \emph{complex algebra}  of its atom structure, a uniquely determined relation algebra whose elements are arbitrary sets of atoms  (see  \cite[section 2.7.2]{HH:book}), and a finite relation algebra is isomorphic to the complex algebra of its atom structure.   It is often convenient to define a relation algebra by giving an atom structure, i.e. a set of atoms, the subset of atoms below the identity, the consistent/forbidden triples and the converses. This is what we will do in section \ref{S:rainbow}, for example.
\subsection{Atomic Networks, Games}
Given an atomic relation algebra $\c A$ with atoms $At(\c A)$, an atomic labelling $N=(nodes(N), \lambda)$ consists of a set of nodes and an edge labelling function $\lambda: nodes(N)\times nodes(N)\rightarrow At(\c A)$.  If
\begin{enumerate}[(i)]
\item $\lambda(x, x)\leq 1'$, 
\item $\lambda(y, x)=(\lambda(x, y))^\conv$, and
\item $(\lambda(x, y), \lambda(y, z), \lambda(x, z))$ is not forbidden, for all $x, y, z\in nodes(N)$, 
\end{enumerate}
the atomic labelling is \emph{coherent} and we call it an \emph{atomic network}.  Given atomic labellings $N=(nodes(N), \lambda), \; N'=(nodes(N'), \lambda')$ we say $N'$ is an extension of $N$, and write $N\subseteq N'$, if $nodes(N)\subseteq nodes(N')$ and the restriction of $\lambda'$ to $nodes(N)\times nodes(N)$ equals $\lambda$.  

   Henceforth, we may write $N$ for the name of the atomic labelling, its set of nodes and also the edge labelling function, distinguishing by context, e.g. $x\in N$ means $x$ is a node of $N$ and $N(x, y)$ denotes the atom labelling the edge $(x, y)$.    

The atomic network game $\mathsf{N} (\c A)$ is played by two players, $\forall$ and $\exists$, and has $\omega$ rounds.  In each round an atomic labelling $N_i$ is played ($i<\omega$).  In the initial round, $\forall$ picks any atom $a\in At(\c A)$ and $\exists$ must respond with an atomic labelling $N_0$ with nodes $x, y$ where $N_0(x, y)=a$ (the nodes $x, y$ may be distinct or not).   In round $i>0$ suppose $N_{i-1}$ was the last atomic labelling played. Then $\forall$ picks any two nodes $x, y\in N_{i-1}$ and any pair of atoms $a, b\in At(\c A)$ such that $(a, b, N_{i-1}(x, y))$ is not forbidden.    We denote his move $(x, y, a, b)$.   In response,  $\exists$ must play $N_i\supseteq N_{i-1}$ where there is $z\in N_{i}$ such that $N_i(x, z)=a,\; N_i(z, y)=b$.    If in any round the atomic labelling $N_i$ fails to be coherent (so fails to be an atomic network) then $\forall$ wins the play.  If $\forall$ does not win in any of the $\omega$ rounds then $\exists$ wins.

If the current atomic labelling is $N$, an $\forall$-move $(x, y, a, b)$ is \emph{trivial} if there is $z\in N$ with $N(x, z)=a, \; N(z, y)=b$.  $\exists$ can always respond to a trivial move by playing $N'=N$.  We will assume that $\forall$ avoids trivial moves. More details on atomic networks and games can be found in \cite[Chapter 11]{HH:book}. They are important here as, in certain circumstances, the chain of networks created during a play of $\mathsf{N}(\c A)$ corresponds to a representation of $\c A$. This is formalised by the following result.
\begin{proposition}
Let $\c A$ be a relation algebra with at most countably many atoms. Then $\c A$ is completely representable if and only if $\exists$ has a winning strategy in the atomic network game $\mathsf{N}(\c A)$.  
\end{proposition}
\begin{proof}
See \cite[theorem 11.7]{HH:book}.
\end{proof}
This proposition may be generalised to atomic  relation algebras with uncountably many atoms, but the atomic network game has to run transfinitely with as many rounds as atoms, taking limits of atomic networks at rounds indexed by limit ordinals.  Working out the details of this is given as \cite[exercise 11.4.3]{HH:book}, but we do not not need the general result here.

\section{The rainbow construction} \label{S:rainbow}
Given two sets $S, T$ we define an atomic relation algebra $\c B_{S, T}$ by defining its atom structure.   
The atoms are
\[ \set{1', \bb, \w, \y}\cup\set{ \g_i: i\in S}\cup\set{ \r_{j, j'}:    j, j'\in T}\]
The non-identity atoms are considered to be black, white, yellow, green or red.  
All atoms are self-converse, except $\r_{j, j'}^\conv = \r_{j', j}$.
Forbidden triples of atoms are Peircean transforms of
\begin{enumerate}
\renewcommand{\theenumi}{\Roman{enumi}}
\item $(1', a, b)$ where $a\neq b$\label{f:1}
\item $(\g_i, \g_{i'}, \g_{i''}), (\g_i, \g_{i'}, \w)$, any $i, i', i''\in S$\label{f:nog}
\item $(\y, \y, \y), (\y, \y, \bb)$\label{f:noy}
\item $(\r_{j_1, j_2}, \r_{j_2', j_3'} , \r_{j_1^*, j_3^*})$, unless $j_1=j_1^*, \; j_2=j_2',\;j_3'=j_3^*$\label{f:red}.
\item \label{f:pim}    $(\g_i, \g_i, \r_{j, j'}), \; (\g_i, \g_{i'}, \r_{j, j})$, any $i, i'\in S,\; j, j'\in T$
\end{enumerate}
\begin{definition} The relation algebra $\c B_{S, T}$ is the complex algebra of this atom structure.  \end{definition}
This relation algebra  can be obtained from  the rainbow algebra of \cite[section 16.2]{HH:book} by regarding $S, T$ as `binary structures' in a relational  language with no  predicates except equality,   by deleting all white atoms $\w_X$ where $X\subseteq S$ has at most two elements, and by forbidding Peircean transforms of $(\g_i, \g_i, \r_{j, j})$ for $i\in S,\; j\in T$.

\begin{theorem} \label{thm:rainbow}
Let   $S, T$ be finite sets with $|S|\geq 2$.  Then the algebra $\c B_{S, T}$ is  representable if and only if $|S|\leq |T|$.
\end{theorem}
\begin{proof}
Suppose first that $\c B_{S, T}$ is   representable, and that $\theta$ is a  representation.  Since $\c B_{S, T}$ is finite, we know that $\theta$ is a complete representation.    
As $\w$ is an atom, and hence non-zero, there must be points $x, y$ in the base of the representation $\theta$ such that $(x, y)\in\w^\theta$ (equivalently, $(x, y)_\theta=\w$, recalling the notation from the end of section \ref{S:reps}). For each $i\in S$ the triple $(\g_{i},\y,\w)$ is not forbidden, so $\w \leq (\g_{i}\comp \y)$, and there must be a point $z_i$ with  $(x, z_{i})\in\g_{i}^\theta$ and $(z_{i}, y)\in\y^\theta$.  For distinct $i, i'\in S$ the atom $(z_i, z_{i'})_\theta$ must be red, by forbidden triples \eqref{f:1}--\eqref{f:noy}. Call this atom $\r(i, i')$, for some  element of $\set{\r_{j, j'}: j, j'\in T}$, and see the first part of figure \ref{F:injection}.
    For distinct $i_1, i_2, i_3\in S$ the first subscript of the red atom $\r(i_1, i_2)$ is the same as that of $\r(i_1, i_3)$ by forbidden triple \eqref{f:red}, see the second part of figure \ref{F:injection}.
Fixing $i\in S$, it follows that each $i'\in S$ corresponds to a point $\iota(i')\in T$ such that $(z_i, z_{i'})_\theta =  \r_{\iota(i), \iota(i')}$. As we have just noted, examining the second part of figure \ref{F:injection} we see that $\iota(i)$ is uniquely specified.  Moreover, by  the second forbidden triple of  \eqref{f:pim},  all indices $\iota(i')$ (for $i'\in S$) are distinct, hence the map $\iota:S \rightarrow T$ is an injection, and so $|S|\leq |T|$.

	\begin{figure}
\[ 
\xymatrix@!=2.5pc{
z_i\ar@{-}@/^/[rd]|-(.3){\y}\ar@{-->}[r]^{\r({i, i'})\in \set{\r_{j, j'}:j, j'\in T}} & z_{i'}\ar@{-}[d]^{\y}\\
x\ar@{-}[r]_{\w}\ar@{-}[u]^{\g_{i}}\ar@{-}@/_/[ur]|-(.4){\g_{i'}} & y
}\hspace{1in}
\xymatrix@!=2.5pc{
z_{i_1}\ar[r]^{\r_{j_1, j_2}} \ar[d]_{\r_{j_1', j_3'}} & z_{i_2} &\\
z_{i_3}\ar[ur]_{\r_{j_3^*, j_2^*}}& { \Rightarrow (j_1=j'_1)}
} 
\]
\caption{\label{F:injection}}
\end{figure}
	\medskip

For the converse, suppose $2\leq |S|\leq |T|$. We aim to show that $\c B_{S, T}$ is completely representable. As discussed above, it is sufficient to show that $\exists$ has a winning strategy in the complete representation game for $\c B_{S, T}$.  We  assume that $\forall$ never plays a trivial move, as $\exists$ can safely respond by leaving the current network unchanged. In particular we assume he never plays $(x, y, a, b)$ if $a= 1'$ or $b= 1'$.  

Suppose then that in a play of this game the current atomic labelling is $N$, and $\forall$ has just played the move $(x,y,a,b)$.  In her response, $\exists$ adds a new node $z$ to $N$ to create $N' = N\cup\{z\}$, and assigns labels to every node $w\in N\setminus\{x,y\}$ (see figure \ref{F:move}). Her strategy is as follows:

\begin{enumerate}[(a)]
\item
 If  $N(w, x)$ and $a$ are not both green, and $N(w, y), b$ are not both green, she sets $N'(w, z)=\w$. 
  \item If $N(w, x), a$ are both green but $N(w, y),b$ are not both yellow, or if $N(w, y), b$ are both green but $N(w, x), a$ are not both yellow, she sets $N'(w, z)=\bb$. 
  \item \label{case:rem} The remaining case is where, for some $i, i'\in S$, we have $N(x, w)=\g_{i},\;a=\g_{i'},\; N(w, y)=b=\y$ (or similar with $x, y$ swapped).  Since we are assuming that $\forall$ makes no trivial moves, we can assume $i\neq i'$.	
  		In this case $\exists$ can only use a red label for $N'(w,z)$, as the other colours are precluded by the forbidden triple rules. 
	To complete the definition of the strategy it remains to provide the two subscripts $j, j'\in T$ of the red atom she chooses for $N'(w, z)$.    	
	
	To help us here we introduce some notation, and make a few preliminary observations. 
Given an atomic  network $N$ for $\c B_{S, T}$ and nodes $x, y\in\nodes(N)$ let
\[R_N(x, y)=\set{z\in\nodes(N): N(x, z)\mbox{ is green and } N(y, z)=\y}.\]
Observe that $R_N(x, y)$ depends only on the green and yellow edge labels of $N$. A set of nodes of a network where every edge between distinct nodes has a  red label is called a \emph{red clique}, and it follows from the forbidden triple rules that $R_N(x, y)$ must be a red clique. 

By definition, part (c) of $\exists$'s strategy is relevant precisely when the new node $z$ is added to non-empty $R_N(x,y)$. The key idea is that every time a red clique of form $R_N(x,y)$ attains size two in the current network, $\exists$ will define an injection $h_{xy}:S\rightarrow T$ and this will be used to guide her choice of labels whenever a new $z$ is added to $R_N(x,y)$ in later rounds. This strategy will be well defined because, as $\exists$ never uses green or yellow labels, the only way the new node $z$ can be added to $R_N(x,y)$ is if $a$ is green and $b$ is yellow (or vice versa), in which case $x$ and $y$ are specified uniquely. Thus, given $z$, there is only one relevant $h_{xy}$.  

Let $N$ be the current atomic network in a play of the atomic representation game.
Assume inductively:
if $|R_N(x, y)|\geq 2$ then there is an injection $h_{xy}:S\rightarrow T$ such that for all $w\neq w'\in R_N(x, y), \; i,  i'\in S$,  we have 
\begin{equation} \label{IH}(N(x, w)=\g_i \wedge N(x, w')=\g_{i'}) \Rightarrow (i\neq i'\wedge N(w, w') = \r_{h_{xy}(i), h_{xy}(i')}). \end{equation}
Initially all red cliques $R_{N_0}(x', y')$ are empty so the induction hypothesis holds.

There are three ways to extend the network $N$ to $N'$ so that a new red clique of form $R_{N'}(x,y)$ is created with $|R_{N'}(x,y)|=2$:
\begin{enumerate}[(i)] 
\item The first is if $|R_N(x,y)|=1$, say $R_N(x, y)=\set w$ where $w\in  N$,  and $\forall$ plays $(x,y,\g_{i'},\y)$ for some  $i'\in S$.  
In this case, $R_{N'}(x, y)=\set{w, z}$, with $N(x, w)=\g_{i}$ for some $i\neq i'\in S$.  Here $\exists$ defines $h_{xy}$ 
to be any injection $h_{xy}:S\to T$. Such an injection must exist as we are assuming $|S|\leq |T|$.  Then she sets $N'(w, z)=\r_{h_{xy}(i), h_{xy}(i')}$, in accordance with \eqref{IH}.

\item The second way such a red clique of size  two can be created is where $w\in N$, \/ $z$ is the new node, and $\forall$'s move is $(x,y,\y,\y)$, with $N(w,x)= \g_i$ and $N(w,y) = \g_{i'}$ for some $i, i'\in S$ (see figure \ref{F:clique}). Since $N$ is an atomic network and $\forall$'s move is legal, $N(x, y)$ must be red and $i\neq i'$ (using the first forbidden triple of \eqref{f:pim}). This creates the red clique $R_{N'}(w,z)=\set{x, y}$ ($\set{x, y}$ was previously a red clique, but not of the form $R_N(x', y')$).   Note that $x$ and $y$ can be the only members of $R_{N'}(w,z)$ as $\exists$ never uses yellow labels, so no other network node can have a yellow edge to $z$.  Here the label $N(x,y)$ was defined in $N$ previously, and must be $\r_{j, j'}$ for some $j\neq j'\in T$, as otherwise $\forall$'s move would be illegal. In this case $\exists$ defines $h_{wz}$ to be any injection $S\rightarrow T$ with $h_{wz}(i) = j$ and $h_{wz}(i') = j'$.

\item The third way is similar to the second, except $\forall$ plays $(x,y,\g_i,\g_{i'})$, and $N(w,x)=N(w,y)=\y$. This is similar to the previous case, and for $\forall$'s move to be legal we must have $N(x,y) = \r_{j, j'}$ for some $j\neq j'\in T$. Again $\exists$ defines $h_{wz}$ to be any injection with $h_{wz}(i) = j$ and $h_{wz}(i') = j'$.
\end{enumerate}

The only case where $\exists$ chooses any red labels is when  $\forall$'s move is $(x, y, \g_i, \y)$ or $(x, y, \y, \g_i)$, for some $i\in S$. So the only way a red clique of form $R_{N'}(x,y)$ with size $k>2$ can occur is where $|R_N(x, y)|=k-1$ and $\forall$'s move for the round is $(x, y, \g_i, \y)$ for some $i\in S$.   Since we are assuming that no trivial moves are played, there is no $w\in R_N(x, y)$ with $N(x, w)=\g_i$.  In this case $\exists$ adds a single new node $z$ to the network and includes it in $R_{N'}(x, y)$. Then, according the $\exists$'s strategy, edges $(z, w)$ have red labels if $w\in R_N(x, y)$ but either white or black labels if $w\in N\setminus R_N(x, y)$.  Note that $z\in R_{N'}(x', y')$ if and only if $x=x'$ and $y=y'$.  Since $k-1\geq 2$ we know inductively that there is an injection $h_{xy}:S\rightarrow T$, satisfying \eqref{IH}.  For $w\in R_N(x, y)$ where $N(x, w)=\g_{i'}$ say, she lets $N(w, z) = \r_{h_{xy}(i'), h_{xy}(i)}$ (see figure \ref{F:label}). This maintains \eqref{IH} and completes the definition of $\exists$'s strategy.
	\end{enumerate}
	
	\begin{figure}
\[ 
\xymatrix@!=2.5pc{
& w\ar[d]_{c_w} \\
& z\ar[dr]_b \\
x\ar[ur]_a\ar@/^1pc/[uur]^{N(x,w)}\ar[rr]_{N(x,y)} & & y\ar@/_1pc/[uul]_{N(y,w)}
}
\]
\caption{\label{F:move}For each $w\in N\setminus\{x,y\}$, \/ $\exists$ must assign the label $c_w=N'(w,z)$.}
\end{figure}
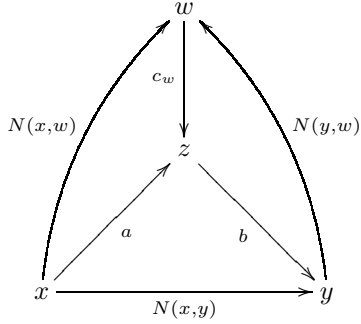

	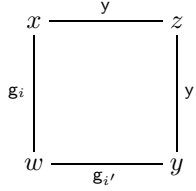
\begin{figure}
\[ 
\xymatrix@!=2.5pc{
x\ar@{-}[r]^{\y} & z\ar@{-}[d]^{\y} \\
w\ar@{-}[u]^{\g_i}\ar@{-}[r]_{\g_{i'}} & y
}
\]
\caption{\label{F:clique}Creating a red clique of size 2.}
\end{figure}


	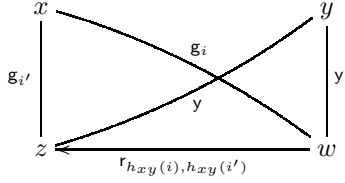
\begin{figure}
\[ 
\xymatrix@!=2.5pc{
x\ar@{-}[d]_{\g_{i'}}\ar@{-}@/^/[drr]^{\g_i} & & y\ar@{-}[d]^{\y}\ar@{-}@/^/[dll]^{\y} \\
z & & w\ar[ll]^{\r_{h_{xy}(i),h_{xy}(i')}}
}
\]
\caption{\label{F:label}The label $N'(w,z)$ in case (c).}
\end{figure}
	
  We must show that this strategy is a good one by proving that $N'$ is an atomic network, i.e. that the labelling is coherent. We must check that the labeling of each triangle $(w, w^*, z)$ for $w\neq w^*\in N$ is not forbidden.  If $\set{w, w^*}=\set{x, y}$ then the triangle is consistent (else $\forall$'s move would be illegal), so without loss of generality we assume that $w\notin \{x,y\}$, and so the label $N'(w,z)$ was just assigned by $\exists$ in this round.  Hence $N'(w, z)$ must be either white, black or red, since she only chooses these colours.  If $N'(w, z)=\w$, then the only possibility that the triangle $(w, w^*, z)$ could be forbidden comes from \eqref{f:nog}, but this requires that $N'(w^*,z)$ and $N'(w,w^*)$ be green, and thus that $w^*\in\{x,y\}$. But then the conditions of (a) would not have been met, so $N'(w,z)$ could not be $\w$ after all.  Similarly, if $N'(w, z)=\bb$ then the possibility of violating \eqref{f:noy} is ruled out by  case (b)  conditions.    
	
	In the remaining case, $N'(w, z)$ is red, and the only forbidden triples involving red atoms are  \eqref{f:red} and \eqref{f:pim}.  Since $w\notin\{x,y\}$ by assumption, the label $N'(w, z)$ must have been assigned according to part (c) of $\exists$'s strategy, so we assume that $N(x, w)=\g_i,\;a=\g_{i'}$, and $N(y,w)=b =\y$ (the case where $x, y$ are swapped is symmetric).      A triangle $(w, w^*, z)$ could only violate forbidden triple \eqref{f:red} if all three edges were red. For this to happen we must have $N'(x,w^*) = \g_{i^*}$ for some $i^*\in S$, and $N'(y,w^*) = \y$, so $\set{w, w^*}\subseteq R_N(x, y)$.  By our induction hypothesis, there is an injection $h_{xy}:S\rightarrow T$ satisfying \eqref{IH} in $N$.  According to part (c) of her strategy, $\exists$ set $N'(w, z)=\r_{h_{xy}(i), h_{xy}(i')}$ and $N'(w^*, z)=\r_{h_{xy}(i^*), h_{xy}(i')}$.   It follows that $(w, w^*, z)$ does not violate \eqref{f:red} (see figure \ref{F:triangle}).


	\begin{figure}
\[ 
\xymatrix@!=2.5pc{
& x\ar@{-}[dl]_{\g_i}\ar@{-}@/^/[dr]^{\g_{i^*}}\ar@{-}[rr]^{\g_{i'}} & & z \\
w\ar@/_/[rrru]^(.42){\r_{h_{xy}(i),h_{xy}(i')}}\ar[rr]_{\r_{h_{xy}(i),h_{xy}(i^*)}} & & w^*\ar[ur]_{\r_{h_{xy}(i^*),h_{xy}(i')}}
}
\]
\caption{\label{F:triangle} The red triangle involving $w,w^*$ and $z$.}
\end{figure}
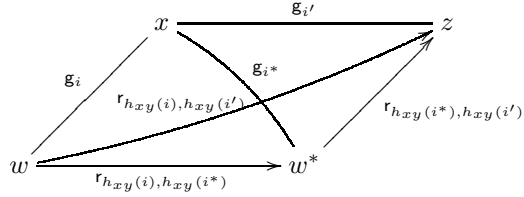

For rule\eqref{f:pim},  the only possible  green-green-red triangle incident with $z$ and $w$ is $(x, w, z)$ and the edge labels  $(\g_i, \g_{i'},\r_{h_{xy}(i), h_{xy}(i')})$ do not violate \eqref{f:pim}, since $h$ is injective.  Hence the labelling is coherent, and $N'$ is an atomic network.
\end{proof}
This theorem may be generalised: if $S, T$ are arbitrary sets where $T$ has at least two elements, then $\c B_{S, T}$ is completely representable if and only if the cardinality of $S$ is no more than that of $T$.  The left to right implication is proved as above.  For the right to left implication we  use a transfinitely long atomic network game with as many rounds as $|S|+\omega$, as mentioned above.   The relation algebras of interest here are finite, so we leave out a formal proof of the more general result.

The class of atom structures of representable relation algebras is known to be elementary \cite{V97:atom}.   We can apply the result above to say something about classes of atom structures intermediate between this class and that of the atom structures of completely representable relation algebras. 

\begin{corollary}\label{C:atoms}
   If $\c K$ is a class of relation algebra atom structures including all atom structures of completely representable relation algebras, and contained in the class of atom structures of representable atomic relation algebras, then $\c K$ cannot be defined by any theory in the language of RA atom structures using only finitely many  atom valued variables.
\end{corollary}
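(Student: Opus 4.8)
The plan is to fix an arbitrary $c\in\nats$ and produce a pair of finite atom structures, one lying in $\c K$ and one lying outside $\c K$, that cannot be separated by any first-order sentence using at most $c$ atom-valued variables. Since any theory defining $\c K$ with only finitely many variables is a $c$-variable theory for some such $c$, this establishes that $\c K$ has no finite-variable axiomatisation.

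For the pair I would take $G=K_{c+2}$ and $H=K_{c+1}$, the irreflexive complete graphs regarded as binary structures, and consider the atom structures of $\c B_{H,H}$ and $\c B_{G,H}$. Theorem~\ref{thm:rainbow} applies to $(H,H)$: for $i\neq i'$ the diagonal $\set{(i,i'),(i,i')}$ is a partial homomorphism, and any partial homomorphism $\set{(i,j),(i',j')}$ with $i\neq i'$ has $j\neq j'$ (since $(i,i')$ is an edge), so it extends to a permutation of $H$; hence $\c B_{H,H}$ is completely representable. For $(G,H)$ the second clause of condition~(1) fails, because $\set{(0,1),(0,1)}$ is a partial homomorphism of size two but there is no homomorphism $K_{c+2}\to K_{c+1}$ at all; so $\c B_{G,H}$ is not completely representable. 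Being finite, it is then not representable, since every representation of a finite relation algebra preserves the finite join of all atoms and so is complete. Therefore the atom structure of $\c B_{H,H}$ lies in $\c K$ (it is the atom structure of a completely representable algebra), whereas the atom structure of $\c B_{G,H}$ does not: the complex algebra of this finite atom structure, which is the only atomic relation algebra carrying it, is $\c B_{G,H}$ itself, which is not representable, so the atom structure is not among the atom structures of representable atomic relation algebras.

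It remains to show that the atom structures of $\c B_{G,H}$ and $\c B_{H,H}$ agree on all sentences using $c$ atom-valued variables, which I would prove by giving $\exists$ a winning strategy in the $c$-pebble Ehrenfeucht--Fra\"iss\'e game between them. Her strategy is to match each pebbled occurrence of $1'$, $\bb$, $\w$, $\y$, or of a red atom $\r_{j,j'}$, to the identically named atom on the other side, and to match pebbled green atoms by a partial injection $\sigma$ between the vertex sets of $G$ and $H$; such a $\sigma$ can always be extended, since at most $c$ green atoms are pebbled on either side while $H$ has $c+1$ vertices. To see this keeps a partial isomorphism, note (using the paper's observation that the only colour patterns exhibiting some-but-not-all forbidden triples are red-red-red and green-green-red) that it suffices to check these two patterns: on red-red-red the matching is the identity, and on a pebbled triple $(\g_i,\g_{i'},\r_{j,j'})$ either $i=i'$, when both images are forbidden by~\eqref{f:ii}, or $i\neq i'$ (so $\sigma(i)\neq\sigma(i')$), when, as all relevant vertex pairs are edges of complete graphs, both triples are forbidden exactly when $j=j'$, by~\eqref{f:pim}. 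So $\exists$ wins and the two atom structures are $c$-variable equivalent, which completes the proof.

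I expect the third paragraph to carry the real content, and it is also where the only genuine obstacle sits. The reason the strategy works is that the forbidden-triple structure of $\c B_{G,H}$ refers to $G$ only through single-vertex data, namely which green atom is involved, so a $c$-pebble partial isomorphism can be sustained; this is exactly why one must use the trimmed algebra $\c B_{G,H}$ rather than the original rainbow algebra, whose white atoms $\w_S$ indexed by two-element sets of $G$-vertices would encode genuinely binary information about $G$ that no atom-valued pebble game could follow. For non-complete graphs the same scheme works with ``$\sigma$ a partial injection'' replaced by ``$\sigma$ coming from $\exists$'s winning strategy in the $c$-colour graph game'' of Section~\ref{S:game}, but the complete-graph instance already suffices here.
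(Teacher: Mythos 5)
Your proposal is correct and follows essentially the same route as the paper: the same modified rainbow pair $\c B_{K_m,K_m}$ versus $\c B_{K_{m+1},K_m}$ (with $m=c+1$), separated via theorem~\ref{thm:rainbow} together with the finite = completely representable observation, and shown $c$-variable equivalent by the same pebble-game strategy matching non-green atoms identically and green atoms by an arbitrary injection, justified by the fact that over complete graphs $(\g_i,\g_{i'},\r_{j,j'})$ is forbidden exactly when $i=i'$ or $j=j'$. The only differences are cosmetic (your pebble/vertex count is $c$ versus $c+1,c+2$ rather than the paper's $m$ versus $m,m+1$, and you spell out why the finite atom structure of a non-representable complex algebra lies outside $\c K$).
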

\begin{proof} 

Given finite sets $S, T$  where $2\leq |S|<|T|$ consider the algebras $\c B_{S, S}$ and $\c B_{T, S}$.  By theorem~\ref{thm:rainbow}, the former is completely representable while the latter is not, and, since they are finite, representability is the same as complete representability. Thus the atom structure of $\c B_{S, S}$ is in $\c K$ and that of $\c B_{T, S}$ is not. 

However, consider the well-known Ehrenfeucht-Fra\"iss\'e game for testing elementary equivalence played over a pair of relational structures using $c$ pebbles. A full description of this game can be found in e.g. \cite[chapter 6]{Im99}, but a brief summary is that, in each round, the player $\forall$ associates a pebble with an element of his choice from one of the structures (again of his choice), and then the player $\exists$ associates the same pebble with an element of her choice from the other structure. This association via pebbles induces a mapping between the two structures, and $\forall$ wins if at any point it fails to be a well-defined isomorphism. The key result is that $\exists$ has a strategy guaranteeing her survival in this game for at least $n$ rounds if and only if the two structures are equivalent with respect to all formulas of quantifier depth at most $n$ using at most $c$ free variables  (see e.g. \cite[theorem 6.10]{Im99}).     

In our case, $\exists$ has a winning strategy in the $|S|$-pebble, $\omega$-round \ef-game over the atom structures of $\c B_{S, S}$ and $\c B_{T, S}$, which we describe now. Whenever $\forall$ picks a non-green atom, $\exists$ picks the corresponding non-green atom in the other atom structure.  If $\forall$ places a pebble where another pebble is already placed, then $\exists$ covers the corresponding pebble in the other structure. If $\forall$ picks a green atom not already in play, then $\exists$ picks any green atom in the other atom structure not currently selected. There are always enough green atoms for this, because the game only uses $|S|$ pebbles. This strategy is a winning one because a triple $(\g_i, \g_{i'},\r_{j,j'})$ is forbidden iff $i=i'$ or $j=j'$, in either atom structure. It follows that the two atom structures agree on all $|S|$-variable formulas .

\end{proof}
   
The corollary shows that the atom structures of $\c B_{S, S}$ and $\c B_{T, S}$ cannot be distinguished in the language of atom structures restricted to $|S|$ atomic variables. 
If we use formulas with variables that range over arbitrary elements of a relation algebra, much more  can be expressed.  
Consider, for example, the formula $\phi_k(x)$ with  variables $x, y$ of which only $x$ appears free which we will define shortly.  It is intended to express that $x$ is above  at least $k$ atoms, in an atomic relation algebra.  So $\phi_1(x)$ is $\neg(x=0)$.  Recursively, suppose $\phi_k(x)$ and $\phi_k(y)$ have been defined (the variables $x$ and $y$ are swapped throughout in the latter formula),  and  suppose the formula holds exactly when the free variable denotes an element  above at least $k$ atoms.  Let $\phi_{k+1}(x)$ be the formula $\exists y(y<x\wedge\phi_k(y))$, where $<$ is the usual shorthand for the formula defining strict  order in boolean algebras.
If $k$ is the number of atoms in $\c B_{T, S}$ then $\exists x \phi_k(x)$ is true in  $\c B_{T, S}$ but not in $\c B_{S, S}$.  As well as defining the finite cardinality of an algebra, two variable formulas can express many other properties. Indeed, it is conceivable that any pair of non-isomorphic  finite relation algebras can be distinguished by a two variable formula; this remains an open problem.  

The proof of corollary \ref{C:atoms} is a kind of warm up for the proof of theorem \ref{thm:main}, which involves formulas with variables ranging over arbitrary relation algebra elements, and  occupies most of the next section.

\section{Axiomatisations of RRA}\label{S:axioms}
We will use the following minor variation of the classic Ehrenfeucht-Fra\"iss\'e game used in the proof of corollary \ref{C:atoms}. Given $n<\omega$ and two relation algebras $\c A, \c B$   we define the  $n$-round equivalence game $\Gamma_n(\c A, \c B, p)$, where $p$ is a sequence of pairs from $\c A\times\c B$.  The idea of the game is to test whether $p$ could define a function extending to an isomorphism from $\c A$ to $\c B$.  

A \emph{play }$(p_0,\ldots, p_n)$ of the game $\Gamma_n(\c A, \c B, p)$ consists of a sequence of sequences of pairs from $\c A \times \c B$, with $p_0 = p$, and where each $p_{i+1}$ is obtained by appending a pair $(a, b)\in\c A \times \c B$ to $p_i$.  These sequences are called \emph{positions}, and $p = p_0$ is the \emph{starting position}.

  If $n = 0$ then nothing happens, and the result of the game will depend only on the starting position $p$. For $n\geq 1$, in round $i<n$, the current position is $p_i$ and $\forall$ chooses either $a\in\c A$ or $b\in\c B$ as he prefers, and $\exists$ chooses the other element. The position $p_{i+1}$  is obtained by appending the pair $(a, b)$ to  $p_{i}$.
The game $\Gamma_n(\c A, \c B, \emptyset)$ that starts from the empty position is denoted $\Gamma_n(\c A, \c B)$. 

Given a position $q = ((a_0,b_0),\ldots,(a_k,b_k))$ arising during a play of $\Gamma_n(\c A, \c B, p)$, define the sequences $\bar{a} = (a_0,\ldots,a_k)$ and $\bar{b} = (b_0,\ldots,b_k)$.
Let $\c A_{\bar a}$ and $\c B_{\bar b}$ 
denote the subalgebras of $\c A$ and $\c B$ generated by  $\bar a$ and $\bar b$, respectively. We want to use the sequence $q$ to define a map $\lr{q}$ from $\c A_{\bar a}$ to $\c B_{\bar b}$. To do this note that elements of $\c A_{\bar a}$ correspond to terms constructed from elements of $\bar a$ and relation algebra constants using relation algebra operations, and similar for elements of $\c B_{\bar b}$. Given such a term $t$ in $\c A_{\bar a}$ we want to define $\lr{q}(t)$ to be the term in $\c B_{\bar a}$ obtained by replacing each $a_i$ with $b_i$ and preserving relation algebra constants. If $\lr{q}$ fails to be an isomorphism, or fails to be well defined at all, then $q$ is a winning position for $\forall$.  

If any position in the play is a winning position for $\forall$ then he wins. If none of the positions are winning positions for $\forall$ then $\exists$ wins.  Since a winning position for $\forall$ remains a winning position for $\forall$ after further play, it follows that  the winner is determined by the final position $p_{n}$. The result is a win for $\exists$ if and only if $\lr{p_n}(t)$ is a well-defined isomorphism. Note that if during the course of the game $\forall$ chooses an element he already chose in a previous round, $\exists$ can respond by choosing the element she chose in that round, and the outcome of the game is unchanged. So we assume without loss of generality that $\forall$ never plays these redundant moves.

The value of these Ehrenfeucht-Fra\"iss\'e games for relation algebras is given by the following definition and lemma.

	\begin{definition}
	Let $n<\omega$, let  $\c A$ and $\c B$ be relation algebras,  and let $\bar a=(a_0, \ldots, a_{k-1})$ and $\bar b=(b_0, \ldots, b_{k-1})$ be tuples from $\c A, \c B$ respectively, of the same length $k$.
	 We write 
	\[(\c A, \bar {a}) \equiv_n (\c B, \bar b)\]
	if whenever $\phi$ is a first-order formula of quantifier depth at most $n$, with free variables from $\set{x_i:i<k}$, in the language of relation algebras,  we have 
	\[\c A, \bar a \models \phi \iff \c B, \bar b \models \phi.\]
	When  $\bar a$ and $\bar b$ are empty we just write $\c A \equiv_n \c B$.
	\end{definition}

\begin{lemma}\label{L:EF}
Let  $n < \omega$, let $\bar a=(a_0, \ldots, a_{k-1}), \;\bar b=(b_0, \ldots, b_{k-1})$ be tuples from $\c A, \c B$ respectively and let  $p=((a_0, b_0),\ldots,(a_{k-1},b_{k-1}))$.  Then  $\exists$ having a winning strategy in $\Gamma_n(\c A,  \c B, p)$  implies $\c A, \bar{a} \equiv_n \c B, \bar{b}$. 
\end{lemma}
\begin{proof}
This is half the well known result for relational signatures used in the proof of corollary \ref{C:atoms} (see e.g. \cite[theorem 6.10]{Im99}). Having functions in the signature blocks the proof of the opposite implication. We induct on $n$. For the base case, if $\exists$ wins  $\Gamma_0(\c A, \c B, p)$ then $p$ induces an isomorphism from $\c A_{\bar a}$ to $\c B_{\bar b}$,  hence $(\c A, \bar{a})\equiv_0(\c B, \bar{b})$.

For the inductive step, suppose $\exists$ has a winning strategy in $\Gamma_{n+1}(\c A, \c B, p)$, and let $\phi$ be a formula of quantifier depth $n+1$. Note that, for all formulas $\phi_1$ and $\phi_2$,  if $\c A$ and $\c B$ disagree about $\phi_1\wedge \phi_2$ they must also disagree about either $\phi_1$ or $\phi_2$, and if they disagree about $\neg \phi_1$ they must also disagree about $\phi_1$.   So we can assume without loss of generality that $\phi=\exists x_k \psi$, where $\psi$ is a formula of quantifier depth $n$.      If $\c A, \bar{ a}\models \exists x_k\psi$ then there is $a_k\in\c A$ such that   $\c A, \bar{ a},a_k\models \psi$.  If $\forall$ plays $a_k$ in the game, then since $\exists$ has a winning strategy there is  $b_k\in\c B$ where $\exists$ has a winning strategy in $\Gamma_n(\c A,  \c B, p')$, where $p'$ is  $p$ with  $(a_k, b_k)$ appended.    Inductively, $\c B,\bar b, b_k \models\psi$, hence $\c B, \bar{b}\models\exists x_k \psi$.  Since the argument is symmetric, it follows that $(\c A, \bar a)$ agrees with $(\c B,\bar b)$ on all  formulas  $\exists x_k\psi$ where $\psi$ has quantifier depth at most $n$, hence they agree on all  formulas of quantifier depth at most $n+1$.      By induction, the lemma holds for all  $n<\omega$. 
\end{proof}

 We now define a colouring game played by $\forall$ and $\exists$ over a pair of sets $(T, T')$. Colours are used to colour \emph{subsets} of $T$ and $T'$, rather than the individual elements used in the pebble games of corollary \ref{C:atoms} and lemma \ref{L:EF}.
Let $n< \omega$ and let $T, T'$ be sets. We define the $n$-round colouring game $\G_n(T, T')$. If $n=0$ then the game ends immediately with neither player making a move. For $n\geq 1$, play of the  game  is a sequence $((T_0, T'_0), \ldots, (T_{n-1}, T'_{n-1}))$ where $T_i\subseteq T,\; T'_i\subseteq T'$, for $i<n$. The positions in a play are its initial segments. The initial position $p_{0}$ is the empty sequence. For $n > 0$, at the start of round $i<n$ the position is denoted $p_i$. Then  $\forall$ chooses either a subset $T_i\subseteq T$ or a subset $T'_i\subseteq T'$, as he prefers, and $\exists$ chooses the other subset. The position is then updated to $p_{i+1}$, which is  $p_i$ with the pair $(T_i,T'_i)$ appended. Note that the final position $p_n$ is the full play $((T_0, T'_0), \ldots, (T_{n-1}, T'_{n-1}))$.     

The numbers $i<n$ denote colours, and we can think of $\forall$ and $\exists$ taking it in turns to paint subsets of $T$ and $T'$ with different colours. Continuing the painting analogy, a \emph{palette}
$\pi$ is a subset of $\set{i:i<n}$, in other words, a choice of colours. When $n=0$ the only palette is $\emptyset$. For $0<i<n$, at position $p_{i} =((T_0, T'_0),\ldots,(T_{i-1},T'_{i-1}))$ we may interpret a palette $\pi$  in $T$ and in $T'$, by

\begin{align*}\pi^{p_{i}}_T=\{x\in T:\forall j<i(x\in T_j \iff j\in \pi)\}\\ \pi^{p_{i}}_{T'}=\{x\in T':\forall j<i(x\in T'_j \iff j\in \pi)\}.
\end{align*}


Intuitively, $\pi^{p_{i}}_T$ is the set of vertices of $T$ with exactly the combination of colours defined by $\pi$ at position $p_{i}$. We define $\pi^{p_0}_T= T$ and $\pi^{p_0}_{T'}= T'$ for all palettes $\pi$. 
Note also that for every non-empty position $p$, the set of vertices of $T$ is the disjoint union of the sets $\pi^{p}_T$, as $\pi$ ranges over palettes, and similar for $T'$.
A position $p$ is a win for $\forall$ if  there is a palette $\pi\subseteq\set{i:i<n}$ where
\begin{align}  \label{eq:win} |\pi^{p}_T|\neq|\pi^{p}_{T'}| \mbox{ and either }|\pi^{p}_T|< 2 \mbox{ or } |\pi^{p}_{T'}|< 2.
\end{align}
If for any $i\leq n$ position $p_i$ is a win for $\forall$ then $\forall$ wins the play, but if no position in the play is a win for $\forall$ then $\exists$ wins.  Since the play is finite, $\forall$ wins if and only if the final position $p_n$ is a win for him.  

We call this game the \emph{Seurat game for sets}  played over $T$ and $T'$. This is a variant  of the Seurat games for digraphs and general binary structures defined in \cite{EgrHirgames} and \cite{EgrHirNote}, respectively, but with no restriction to the number of colours and no winning condition for $\forall$ relating to edges or binary relations. The Seurat game for sets over $T, T'$ can be thought of as a special case of a Seurat game for digraphs by thinking of $T$ and $T'$ as being complete graphs.   

\begin{lemma}\label{L:K}
Let $n < \omega$, and let $T$ and ${T'}$ be sets of size at least $2^{n+1}$. Then $\exists$ has a winning strategy in $\G_{n}(T, T')$.
\end{lemma}
\begin{proof}
If $|T|,|T'|\geq 2$ then $\exists$ wins $\G_{0}(T, T')$ as $\pi_T^{p_0} = T$ and $\pi_{T'}^{p_0}=T'$, so assume $n>0$.
Suppose $\exists$ plays according to the following principle:  If the position is $p_{r}$, for every palette $\pi\subseteq\set{i:i<n}$, she ensures that
\begin{equation*}\tag{$\dagger_r$}
\label{t:dag} 
(|\pi^{p_{r}}_{T}|< 2^{n+1-r}\vee |\pi^{p_{r}}_{T'}|<  2^{n+1-r}) \rightarrow |\pi^{p_r}_{T}|=|\pi^{p_r}_{T'}|.
\end{equation*}
If $\exists$ can maintain $(\dagger_r)$ while $r\leq n$ she will survive all rounds.  At the end of the final round $n$, $(\dagger_{n})$ ensures that \eqref{eq:win} holds, but note that if the game were  to continue for another round then she might lose, because $(\dagger_{n+1})$ does not insure her  against violating \eqref{eq:win}.

We now prove by induction that $\exists$ can indeed always play so as to ensure \eqref{t:dag}  holds up to and including $r=n$. The base case is $r=0$. The position here is $p_0$, and by definition $\pi^{p_0}_T= T$ and $\pi^{p_0}_{T'}=T'$ for all palettes $\pi$, and so $(\dagger_{0})$ holds as by assumption $|T|,|T'|\geq 2^{n+1}$.  

Suppose now that $0\leq r< n$, that the position is $p_r$, and that $(\dagger_{r})$ holds.  Suppose without loss of generality that $\forall$ picks $T_r\subseteq T$ (the case where he chooses a subset of $T'$ is similar).  For $\exists$'s response $T'_r\subseteq T'$, she will pick disjoint subsets $T'_\pi\subseteq \pi^{p_{r}}_{T'}$ (for each palette $\pi$) and then she will define $T'_r=\bigcup_{\pi\subseteq\set{i:i<n}} T'_\pi$.  How she chooses each $T'_\pi$ is explained next. Note that if $\pi$ is a palette, then $\pi^{p_{r+1}}_T$ will either be $\pi^{p_{r}}_T\cap T_{r}$ or $\pi^{p_{r}}_T\setminus T_{r}$, depending on whether or not $r\in \pi$, and similar for $\pi^{p_{r+1}}_{T'}$. So maintaining $(\dagger_{r+1})$ comes down to ensuring these sets have the right cardinalities.  
\begin{itemize}
\item  If $|\pi^{p_{r}}_T\cap T_{r}|< 2^{(n+1)-(r+1)} = 2^{n-r}$ and $|\pi^{p_{r}}_T\setminus T_{r}|< 2^{n-r}$, then $|\pi^{p_{r}}_T|< 2^{n+1-r}$, so by the inductive assumption $(\dagger_{r})$ we have $|\pi^{p_{r}}_T|=|\pi^{p_{r}}_{T'}|$.  Here she lets $T'_\pi$ be any subset of $\pi^{p_{r}}_{T'}$ of size $|\pi^{p_{r}}_T\cap T_{r}|$. It follows that $\pi^{p_{r}}_{T'}\setminus T'_\pi$ has the same size as $\pi^{p_{r}}_T\setminus T_r$.
 
\item  If $|\pi^{p_{r}}_T\cap T_r|< 2^{n-r}$ but $|\pi^{p_{r}}_T\setminus T_r|\geq 2^{n-r}$ then she lets $T'_\pi$ be any subset of $\pi^{p_{r}}_{T'}$ of the same size as $\pi^{p_{r}}_T\cap T_r$. It follows that $\pi^{p_{r}}_{T'}\setminus T'_\pi$ will have size at least $2^{n-r}$, because otherwise $|\pi^{p_r}_{T'}|<2^{n+1-r}$, and so by $(\dagger_{r})$ we would have $|\pi^{p_r}_{T}| = |\pi^{p_r}_{T'}|$, and thus $2^{n-r}\leq |\pi^{p_{r}}_T\setminus T_r| = |\pi^{p_{r}}_{T'}\setminus T'_\pi|< 2^{n-r}$, which would be a contradiction.  
 
\item Similarly, if $|\pi^{p_{r}}_T\setminus T_r|< 2^{n-r}$ but $|\pi^{p_{r}}_T\cap T_r|\geq 2^{n-r}$ she chooses $T'_\pi$ so that $\pi^{p_{r}}_{T'}\setminus T'_\pi$ has the same size as $\pi^{p_{r}}_T\setminus T_r$.

\item  Finally, if both $\pi^{p_{r}}_T\cap T_r$ and $\pi^{p_{r}}_T\setminus T_r$ have size at least $2^{n-r}$ then $|\pi^{p_{r}}_T|\geq 2^{n+1-r}$, so inductively $|\pi^{p_{r}}_{T'}|\geq 2^{n+1-r}$.  She lets $T'_\pi\subseteq \pi^{p_{r}}_{T'}$ be any subset of size $2^{n-r}$, and so $|\pi^{p_{r}}_{T'}\setminus T'_\pi|\geq 2^{n-r}$.
\end{itemize}
So
\begin{align*} 
\pi^{p_{r+1}}_T&=\begin{cases} \pi^{p_{r}}_T\setminus T_r\mbox{ if }r\not\in\pi\\ \pi^{p_{r}}\cap T_r\mbox{ if }r\in\pi \end{cases}\\
\pi^{p_{r+1}}_{T'}&= \begin{cases} \pi^{p_{r}}_{T'}\setminus T'_r = \pi^{p_{r}}_{T'}\setminus T'_\pi \mbox{ if } r\not\in\pi \\ \pi^{p_{r}}\cap T'_r\mbox{ if }r\in\pi \end{cases}
\end{align*}

By the definition by cases given above, the cardinalities of these sets agree when necessary. Thus ($\dagger_{r+1}$) is established for the new position  $p_{r+1}$.
\end{proof}

Let $T, T', S$ be finite sets.   We are interested in the game $\Gamma_n(\c B_{T, S}, \c B_{T', S})$. Recall that $\c B_{T, S}$ and $\c B_{T', S}$ are complex algebras generated by the `rainbow' atom structures defined in section \ref{S:rainbow}. Note that $\c B_{T, S}$ and $\c B_{T', S}$ are both finite (and thus atomic), and differ only with respect to their sets of green atoms (we identify the non-green atoms between each algebra in the obvious way). 
Given an element $x\in \c B_{T, S }$, we define the subset of elements of $T$ indexing green atoms below $x$ in the standard boolean ordering of $\c B_{T, S}$ to be $T_x$. 
 In addition, given an element $x\in \c B_{T, S}$ we define the \emph{green part} of $x$ to be the join of the set of green atoms below $x$, and the non-green part to be the join of all the other atoms below $x$. We say $x$ is \emph{green}, if it is a sum of green atoms. We make analogous definitions for $y\in \c B_{T', S}$.

\begin{proposition}\label{P:K-equiv}
Let $T, T', S$ be finite sets, and let $n<\omega$. If $\exists$ has a winning strategy in $\G_{n}(T, T')$, then $\exists$ has a winning strategy in $\Gamma_{n}(\c B_{T, S}, \c B_{T', S})$
\end{proposition}
\begin{proof}
$\exists$'s strategy is to simulate a corresponding play of $\G_{n}(T, T')$ in which she uses her winning strategy,  and to maintain a correspondence between the plays of the games.  So, if $\forall$ selects $x\in\c B_{T, S}$, then she selects $y\in \c B_{T', S}$ whose non-green part is identical to that of $x$ and whose green part is defined by her response to the $\forall$-move $T_x$ in the play of $\G_{n}(T, T')$, and similar when he picks an element of $\c B_{T', S}$.

We assume that $n\geq 1$, as the argument for the $n=0$ case is essentially the same, but simpler as all that is considered are the empty starting positions in both games. So let $p_{n}=((a_0, b_0),\ldots,(a_{n-1},b_{n-1}))$ be the position at the end of the final round in a play of the game $\Gamma_{n}(\c B_{T, S}, \c B_{T', S})$ in which $\exists$ uses this strategy, and let $((T_0, T'_0),\ldots,(T_{n-1},T'_{n-1}))$ be the corresponding  position at the end of the final round in the corresponding play of $\G_{n}(T, T')$. Note that $p_{n}$ is not a winning position for $\forall$ if and only if $p_i$ is not a winning position for him for all $i\leq n$, so it is sufficient for us to prove that $\forall$ does not win at the end of round $n$. 

For each $j<n$ we have $T_j = T_{a_j}\subseteq T$, where $T_{a_j}$ is the set of elements of $T$ indexing the green atoms below $a_j$, and $T'_j=T'_{b_j}\subseteq T'$ similarly.  From the assumption that $\exists$ is playing according to a winning strategy in $\G_{n}(T, T')$  we have
\begin{equation}\label{t:dag2}
 \text{either $|\pi^{p_{n}}_T|=|\pi^{p_{n}}_{T'}|$ or $|\pi^{p_{n}}_T|, |\pi^{p_{n}}_{T'}|\geq2$, for all palettes $\pi$},\end{equation}
and we have to prove that $p_{n}$ is not a losing position in the game $\Gamma_{n}(\c B_{T, S}, \c B_{T', S})$. If we define $\bar a = (a_0, \ldots, a_{n-1})$ and $\bar b = (b_0,\ldots, b_{n-1})$, it is sufficient to show that  the map $\bar a \mapsto \bar b$ induces a relation algebra isomorphism from $(\c B_{T, S})_{\bar  a}$  to $(\c B_{T', S})_{\bar b}$ (recall that these are the subalgebras generated by $\bar a$ and $\bar b$ respectively).

   Consider the boolean subalgebra $\c B$ of $\c B_{T, S}$ generated (using boolean operators) by $\set{a_0, \ldots, a_{n-1}}$ and all the non-green atoms.   Given a palette $\pi$, let $\pi^{\c B}=\sum_{i\in\pi^{p_{n}}_T}\g_i\in \c B$.  If $\pi^{p_{n}}_T=\emptyset$ then  the sum is empty and $\pi^{\c B}=0$, else  the sum is non-empty and $\pi^{\c B}$  is an atom of $\c B$ (though not usually an atom of $\c B_{T,S}$). To see that non-zero $\pi^{\c B}$ is an atom note that it has zero intersection with every non-green atom, and its intersection with an element $a_i$ is either itself, when $i\in \pi$, or zero otherwise.  A little thought reveals that all green atoms of $\c B$ arise from palettes in this way. Note that in the $n=0$ case, the only palette is $\pi = \emptyset$, and  $\pi^{\c B}=\sum_{i\in T}\g_i$, so there is exactly one green atom in $\c B$.
		
  Similarly, let $\c B'$ be the boolean sub-algebra of $\c B_{T', S}$ generated by the non-green atoms and $\set{b_0,\ldots, b_{n-1}}$, and let  $\pi^{\c B'}=\sum_{i\in \pi^{p_{n}}_{T'}}\g_i$.
   Since $\pi^{p_{n}}_T=\emptyset\iff \pi^{p_{n}}_{T'}=\emptyset$, for all palettes (by \eqref{t:dag2}), the map 
	\[\phi=\set{ (\pi^{\c B}, \pi^{\c B'}):\pi\subseteq\set{0, \ldots, n-1}}\setminus\set{(0,0)}\]
	is a bijection from the green atoms of $\c B$ to those of $\c B'$ which extends to a unique boolean isomorphism $\hat{\phi}:\c B\rightarrow\c B'$ fixing non-green atoms.

   We will show that $\c B$ is a relation algebra and that $\hat{\phi}$ is a relation algebra isomorphism. Note first that $\c B$ contains the identity and is closed under conversion, since all green elements are self-converse, furthermore both identity and converse are preserved by $\hat{\phi}$.  To show that $\c B$ is a relation algebra  and to show that $\hat\phi{\;}$ is a relation algebra isomorphism, we must show, for all atoms $x, y, z\in\c B$ that
   \begin{align}\label{eq:atom}
  \mbox{   either } (\times) (x;y)\cdot z=0 &\mbox{ or }(\checkmark) x;y\geq z, \mbox{ and}\\
 \label{eq:phi} (x\comp y)\cdot z = 0 &\iff (\phi(x)\comp \phi(y))\cdot \phi(z) =0.\end{align}
   In \eqref{eq:atom},  we indicate the  two alternatives by  $\times$ and $\checkmark$.

The cases where $1'\in\set{x, y, z}$ are easy, so suppose $x, y, z$ are non-identity atoms of $\c B$ (so each has a colour).  Recall that the only sets of three colours where some but not all triples of atoms of those colours are forbidden, are red-red-red and green-green-red  (rules \ref{f:red} and \ref{f:pim}).  In all other cases  either all triples of atoms of those colours are forbidden and we have $\times$ (and both sides of \eqref{eq:phi} are true) or none is forbidden and we have $\checkmark$ (and both sides are false).  If $z$ is red  we get $\checkmark$ or $\times$ automatically since $z$ is an atom of $\c B_{T, S}$. We also have \eqref{eq:phi} as, if $x$ and $y$ are red, then $\hat{\phi}$ fixes them, since they are non-green. Alternatively, if $x$ and $y$ are green then $x = \pi_1^{\c B}$ and $y=\pi_2^{\c B}$ for some palettes $\pi_1$ and $\pi_2$. In this case $(x\comp y)\cdot z \neq 0$ if and only if there is $i\neq i'\in T$ with $\g_i\leq x$ and $\g_{i'}\leq y$ (by rule \eqref{f:pim}), which occurs if and only if $\pi_1^{\c B}$ and $\pi_2^{\c B}$ are non-empty, if and only if $\pi_1^{\c B'}$ and $\pi_2^{\c B'}$ are non-empty (by the assumption that $\exists$ is using a winning strategy in $\G_{n}(T, T')$), if and only if $(\phi(x)\comp \phi(y))\cdot \phi(z) \neq 0$.   
If $x, y, z$ are all red, then they are fixed by $\hat\phi$, so \eqref{eq:phi} holds trivially.

  So, without loss of generality suppose that $x = \pi_1^{\c B}$ and $z=\pi_2^{\c B}$ are green and $y=\r_{j',j}$ is red (for some $j, j'\in S$). We are interested in $(\pi_1^{\c B}\comp \r_{j',j})\cdot \pi_2^{\c B}$. We want to show that either for every green atom $\g_i$ (of $\c B_{T, S}$) below $\pi_2^{\c B}$ there is a green atom $\g_{i'}$ below $\pi_1^{\c B}$ such that $(\g_{i'},\r_{j',j},\g_{i})$ is not forbidden (for $\checkmark$), or that for every green atom $\g_i$ (of $\c B_{T, S}$) below $\pi_2^{\c B}$ and for every green atom $\g_{i'}$ below $\pi_1^{\c B}$, the triple $(\g_{i'},\r_{j',j},\g_{i})$ is forbidden (for $\times$). 

Applying the Peircean equivalences, the triple under consideration here is equivalent to $(\g_i,\g_{i'},\r_{j,j'})$.  According to rule \eqref{f:pim}, this will be forbidden if and only if either $i=i'$, or $j=j'$. If $j=j'$, then $(\g_i,\g_{i'},\r_{j,j})$ is always forbidden (regardless of $i, i'$), so we have $\times$, and both sides of \eqref{eq:phi} are true. Alternatively, suppose $j\neq j'$ and let $\g_i\leq\pi_2^{\c B}$. If $x = z$, i.e. if $\pi_1=\pi_2$, then either $\pi_1^\c B=\pi_2^\c B=\g_i$ for some $i\in T$, in which case we have $\times$ as  $(\g_i,\g_{i},\r_{j,j'})$ is forbidden, or  $\pi_1^\c B=\pi_2^\c B$ is above at least two distinct green atoms, in which case we have $\checkmark$ as   $(\g_i,\g_{i'},\r_{j,j'})$ is not forbidden when $i\neq i'$ and $j\neq j'$.  As before, both sides of \eqref{eq:phi} are true, or they are both false in each case.  Similarly, if $x\neq z$, i.e. if $\pi_1\neq \pi_2$, then as $\pi_1^\c B\neq0$ and palettes are disjoint, for each green atom $\g_i$ below $\pi_2^\c B$, there must be $\g_{i'}\leq \pi_1^\c B$ distinct from $\g_i$. As $(\g_i,\g_{i'},\r_{j,j'})$ is not forbidden this proves $\checkmark$, and both sides of \eqref{eq:phi} are false. 
        
    It follows that $\c B$ is closed under all relation algebra operations, and is a sub-relation algebra of $\c B_{T, S}$.     Since $\set{a_0,\ldots,a_{n-1}}\subseteq\c B$ we have the inclusion of relation algebras,  $(\c B_{T, S} )_{\bar a} \subseteq \c B\subseteq \c B_{T, S}$.  Similarly, the boolean subalgebra $\c B'$ of $\c B_{T', S}$ generated by non-green atoms and  $\set{b_0,\ldots, b_{n-1}}$ is a sub-relation algebra of $\c B_{T', S}$ extending $(\c B_{T', S})_{\bar b}$.    By \eqref{eq:phi}, $\hat\phi{\;}$ is a relation algebra isomorphism from $\c B$ onto $\c B'$.

Moreover, for all $i<n$ we have $\hat{\phi}(a_i) = b_i$, as $\exists$'s strategy in $\Gamma_{n}(\c B_{T, S}, \c B_{T', S})$ ensures this is true. It follows that the restriction of $\hat{\phi}$ to $(\c B_{T, S})_{\bar a}$, which, as we have just proved, is an isomorphism onto $(\c B_{T', S})_{\bar b}$, is generated by $\set{(a_i, b_i):i<n}$. This proves the result. 
\end{proof}

\begin{corollary}\label{C:K-equiv}
Let $n<\omega$, let $T, T', S$ be finite sets.   If   $|T|, |T'| \geq 2^{n+1}$,  then $\c B_{T, S}\equiv_{n}\c B_{T', S}$.
\end{corollary}
\begin{proof}
By lemma \ref{L:K} $\exists$ has a winning strategy in $\G_{n}(T, T')$. So, by proposition \ref{P:K-equiv} she has a winning strategy in $\Gamma_{n}(\c B_{T, S}, \c B_{T', S})$. It follows by  lemma \ref{L:EF} that  $\c B_{T, S}\equiv_{n}\c B_{T',S}$.
\end{proof}

We can now prove our main result.

\begin{theorem}\label{thm:main}
If $\Sigma$ is a set of first-order formulas defining RRA, then $\Sigma$ includes formulas of arbitrary quantifier depth.
\end{theorem}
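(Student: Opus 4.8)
My plan is to argue via the \ef\ characterisation of quantifier depth. Suppose, for contradiction, that RRA were defined by a set $\Sigma$ of first-order formulas all of quantifier depth at most $n$. Then whenever $\exists$ has a \ws\ in the $n$-round \ef\ game on a pair of relation algebras $\c A$ and $\c B$, these structures agree on every first-order sentence of quantifier depth at most $n$, in particular on every formula of $\Sigma$, so $\c A$ lies in RRA exactly when $\c B$ does. It therefore suffices to exhibit, for arbitrarily large $n$, a pair of relation algebras $\c A_n\in\text{RRA}$ and $\c B_n\notin\text{RRA}$ on which $\exists$ wins the $n$-round \ef\ game. (As a by-product this also recovers the fact that RRA is not finitely axiomatisable.)

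For the pair I would take $\c A_n=\c B_{H,H}$ and $\c B_n=\c B_{G,H}$ from the modified rainbow construction of Section~\ref{S:rainbow}, with $H=K_\ell$ a complete graph and $G=K_m$ a strictly larger complete graph, where $\ell$ (and hence $m=\ell+1$) is chosen large relative to $n$. Three things then need checking. First, a homomorphism of complete graphs is injective, so there is no homomorphism $G\to H$; hence $\c B_{G,H}$ is not completely representable by the theorem of Section~\ref{S:rainbow}, and since for finite $G$ the algebra $\c B_{G,H}$ is finite it is therefore not representable, giving $\c B_n\notin\text{RRA}$. Second, every partial homomorphism from $K_\ell$ to $K_\ell$ of size at most two is a partial injection and so extends to an automorphism; by the observation in the introduction, $\c B_{H,H}$, and hence $\c A_n$, is therefore representable. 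Third --- the substantive point --- $\exists$ must win the $n$-round \ef\ game on $\c B_{G,H}$ versus $\c B_{H,H}$.

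For the third point I would use the vertex-colouring games of Section~\ref{S:game}. The key lemma to prove is that a \ws\ for $\exists$ in the $r$-round vertex-colouring game on $G$ and $H$, for a suitable bound $r=r(n)$ depending only on $n$, yields a \ws\ for $\exists$ in the $n$-round \ef\ game on $\c B_{G,H}$ and $\c B_{H,H}$: $\exists$ plays the algebra game while running the colouring game as a subroutine, using her colouring-game replies to decide which atom, hence which rainbow-coloured element, to play. It then only remains to observe that two complete graphs with at least $r$ vertices are indistinguishable in the $r$-round vertex-colouring game --- $\exists$ just maintains an injection between the pebbled vertices --- so taking $\ell\geq r(n)$ makes $K_m$ and $K_\ell$ indistinguishable for as many rounds as the lemma requires. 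Combining the three points supplies, for every $n$, the required pair, and hence the theorem.

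I expect the transport lemma of the previous paragraph to be the main obstacle, and it is precisely here that passing from $\c A_{G,H}$ to $\c B_{G,H}$ is essential. The white atoms $\w_S$ indexed by pairs of $G$-nodes carry binary information that a monadic vertex-colouring game cannot track, so they have to be deleted along with the forbidden triples that mention them; but then one must recover, from the completeness of $H$, the fact that this deletion has not destroyed the representability of $\c B_{H,H}$. Making this precise requires a careful description of the atom structure of $\c B_{G,H}$, an analysis of which forbidden triples survive the deletion, and round-by-round bookkeeping showing how $\exists$'s replies in the underlying representation and colouring games feed into the \ef\ game on the two algebras; that analysis is the technical heart of the argument.
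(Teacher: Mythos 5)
Your overall route coincides with the paper's: the same algebras $\c B_{K_\ell,K_\ell}\in$ RRA and $\c B_{K_{\ell+1},K_\ell}\notin$ RRA (via theorem~\ref{thm:rainbow}, with finiteness converting complete representability into representability), and the same two-stage reduction, namely a transport lemma turning a \ws\ for $\exists$ in the colouring game on the first parameters into a \ws\ in the algebra game (this is lemma~\ref{L:K-equiv}, fed into lemma~\ref{L:EF}). But there is a genuine error in your final step. The game of section~\ref{S:game} that the transport actually needs is the Seurat game $\G^c_r$, in which the players colour arbitrary \emph{sets} of vertices; there are no ``pebbled vertices'' between which $\exists$ could maintain an injection, and your claim that two complete graphs with at least $r$ vertices are indistinguishable for $r$ rounds is false for this game. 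Indeed $\forall$ has a halving strategy: whenever some palette class has different finite sizes in the two graphs, he recolours about half of it, and $\exists$ can be forced into a class of size $<2$ with mismatched cardinalities after roughly $\log_2\min(m,m')$ rounds, so he separates $K_m$ from $K_{m'}$ ($m\neq m'$) in logarithmically many rounds. The correct statement is lemma~\ref{L:K}: to survive $n-1$ rounds one needs $m,m'\geq 2^n$, and the proof is a cardinality-bookkeeping invariant ($\dagger_r$) about palette classes below a shrinking threshold, not an injection argument. The slip matters beyond the size bound: the ``injection between pebbled vertices'' picture is really a vertex-pebble game, and a \ws\ in that game would \emph{not} support the transport lemma, because when $\forall$ plays an arbitrary element of $\c B_{K_m,F}$ its green part is an arbitrary subset of the vertex set, so $\exists$'s simulation must answer set-colouring moves; this is essentially the same monadic-information issue that invalidated the original proposal in \cite[problem 1]{HH:book}. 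The repair is routine given the paper's lemma: take $\ell\geq 2^{n+1}$ (exponential, not linear, in the number of rounds) and prove the counting lemma for the set game.

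Two smaller points. First, since the relation-algebra signature has function symbols, the ``$n$-round \ef\ game'' you invoke must be the version whose positions are required to induce isomorphisms of the \emph{generated} subalgebras (the game $\Gamma^c_n$ of the paper); only the direction ``\ws\ for $\exists$ implies agreement on depth-$\leq n$, $c$-variable formulas'' is available (lemma~\ref{L:EF}), but that is the direction you use, and for sentences of depth $n$ one may take $c=n$. Second, the transport lemma that you defer as ``the technical heart'' is indeed where the real work lies: one must check that the boolean subalgebra generated by the played elements together with all non-green atoms is closed under composition and that the induced map is an RA isomorphism, which is exactly where completeness of the graphs (only green-green-red and red-red-red triples are sensitive, and their status depends only on palette sizes and equality of palettes) is used; identifying this as the crux is correct, but as written your proposal does not yet contain that argument.
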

\begin{proof}
Let $T, S$ be sets where $|S|=2^{n+1},\; |T|=2^{n+1}+1$. Then $\c B_{S, S}\in RRA$ but $\c B_{T, S}\not\in RRA$, by theorem \ref{thm:rainbow}, and $\c B_{S, S}\equiv_{n}\c B_{T, S}$ by corollary \ref{C:K-equiv}.  It follows that RRA cannot be axiomatised by any theory consisting of sentences of quantifier depth at most $n$.
\end{proof}

\begin{remark}
This suggests that a similar construction could be used to prove that the class of representable cylindric algebras of dimension $n$ cannot be defined by a theory of bounded quantifier depth, however we have not succeeded in demonstrating this.    There is a way of constructing a \emph{rainbow cylindric algebra} of dimension $n\geq 4$ from two graphs $G, H$ given in \cite[\S4.3.3] {HH2}.  The atoms of this cylindric algebra are certain labelled hypergraphs on $n$ nodes.  The two-dimensional edges of these hypergraphs have green, red and white labels generalising the green, red, yellow and black atoms of  the rainbow relation algebra $\c A_{G, H}$, but these hypergraphs also have $(n-1)$-ary hyperlabels on some hyperedges in the cylindric version.    However, just as we were able to modify the rainbow relation algebra construction, essentially by deleting all atoms $\w_S$ for $|S|\leq 2$, we can modify the rainbow cylindric construction by deleting all hyperedges.
    It follows that $\c C^n_{G, H}$ is generated by its \emph{relation algebra reduct}.  By considering the graphs $K_{m+1}$ and $K_m$ we can show that $\c C^n_{K_{m+1}, K_m}$ is not in $RCA_n$ but $\c C^n_{K_m, K_m}$ is in $RCA_n$.       The problem is that although the atom structures of these two cylindric algebra agree on all $m$-variable atom structure formulas, we cannot prove  a cylindric version of Proposition~\ref{P:K-equiv},  so we do not   know if the two cylindric algebras are equivalent with respect to unrestricted formulas of quantifier depth at most $\log m$.  Thus our attempt to extend to various algebras of higher order relations using the known connections between relation algebras and cylindric algebras was not successful.
\end{remark}

\begin{thebibliography}{10}

\bibitem{AMN91}
H.~Andr{\'{e}}ka, J.~Monk, and I.~N{\'{e}}meti, editors.
\newblock {\em Algebraic logic}, volume~54 of {\em Colloq.\ Math.\ Soc.\ J.
  Bolyai}.
\newblock North-Holland, Amsterdam, 1991.

\bibitem{BW17}
R.~Berghammer and M.~Winter.
\newblock Solving computational tasks on finite topologies by means of relation
  algebra and the {R}el{V}iew tool.
\newblock {\em J. Log. Algebr. Methods Program.}, 88:1--25, 2017.

\bibitem{ChiTar51}
L.~H. Chin and A.~Tarski.
\newblock Distributive and modular laws in the arithmetic of relation algebras.
\newblock {\em Univ. California Publ. Math. (N.S.)}, 1:341--384, 1951.

\bibitem{EgrHirNote}
R.~Egrot and R.~Hirsch.
\newblock A corrected strategy for proving no finite variable axiomatisation
  exists for {RRA}.
\newblock https://arxiv.org/abs/2109.01357, 2021.

\bibitem{EgrHirgames}
R.~Egrot and R.~Hirsch.
\newblock Seurat games on {S}tockmeyer graphs.
\newblock {\em Journal of Graph Theory}, In Press.

\bibitem{FGPVGW16}
G.~H.~L. Fletcher, M.~Gyssens, J.~Paredaens, D.~Van~Gucht, and Y.~Wu.
\newblock Structural characterizations of the navigational expressiveness of
  relation algebras on a tree.
\newblock {\em J. Comput. System Sci.}, 82(2):229--259, 2016.

\bibitem{Giv88}
S.~Givant.
\newblock Tarski's development of logic and mathematics based on the calculus
  of relations.
\newblock In Andr{\'{e}}ka et~al. \cite{AMN91}, pages 189--215.

\bibitem{Giv06}
S.~Givant.
\newblock The calculus of relations as a foundation for mathematics.
\newblock {\em Journal of Automated Reasoning}, 37(4):277--322, 2006.

\bibitem{Gut18}
W.~Guttmann.
\newblock Verifying minimum spanning tree algorithms with {S}tone relation
  algebras.
\newblock {\em J. Log. Algebr. Methods Program.}, 101:132--150, 2018.

\bibitem{Hi94c}
R.~Hirsch.
\newblock Completely representable relation algebras.
\newblock {\em Bulletin of the interest group in propositional and predicate
  logics}, 3(1):77--92, 1995.

\bibitem{HH2}
R.~Hirsch and I.~Hodkinson.
\newblock Complete representations in algebraic logic.
\newblock {\em J. Symbolic Logic}, 62(3):816--847, 1997.

\bibitem{HH:raca2}
R.~Hirsch and I.~Hodkinson.
\newblock Relation algebras from cylindric algebras, {II}.
\newblock {\em Ann. Pure.\ Appl.\ Logic}, 112:267--297, 2001.

\bibitem{HH:book}
R.~Hirsch and I.~Hodkinson.
\newblock {\em Relation algebras by games}.
\newblock North-Holland. Elsevier Science, Amsterdam, 2002.

\bibitem{HHM98:jsl}
R.~Hirsch, I.~Hodkinson, and R.~Maddux.
\newblock Relation algebra reducts of cylindric algebras and an application to
  proof theory.
\newblock {\em J. Symbolic Logic}, 67(1):197--213, 2002.

\bibitem{HVCan}
I.~Hodkinson and Y.~Venema.
\newblock Canonical varieties with no canonical axiomatisation.
\newblock {\em Trans. Amer.\ Math.\ Soc.}, 357:4579--4605, 2005.

\bibitem{Im99}
N.~Immerman.
\newblock {\em Descriptive complexity}.
\newblock Graduate Texts in Computer Science. Springer-Verlag, New York, 1999.

\bibitem{Jon88}
B.~J{\'{o}}nsson.
\newblock The theory of binary relations.
\newblock In Andr{\'{e}}ka et~al. \cite{AMN91}, pages 245--292.

\bibitem{JT48}
B.~J{\'{o}}nsson and A.~Tarski.
\newblock Representation problems for relation algebras.
\newblock {\em Bull. Amer.\ Math.\ Soc.}, 54:80, 1192, 1948.

\bibitem{Ly50}
R.~Lyndon.
\newblock The representation of relational algebras.
\newblock {\em Annals of Mathematics}, 51(3):707--729, 1950.

\bibitem{Ma89}
R.~Maddux.
\newblock Non-finite axiomatizability results for cylindric and relation
  algebras.
\newblock {\em J. Symbolic Logic}, 54(3):951--974, 1989.

\bibitem{Ma91}
R.~Maddux.
\newblock The origin of relation algebras in the development and axiomatization
  of the calculus of relations.
\newblock {\em Studia Logica}, 50(3--4):421--455, 1991.

\bibitem{Mon64}
J.~Monk.
\newblock On representable relation algebras.
\newblock {\em Michigan Mathematics Journal}, 11:207--210, 1964.

\bibitem{T41}
A.~Tarski.
\newblock On the calculus of relations.
\newblock {\em J. Symbolic Logic}, 6:73--89, 1941.

\bibitem{T55}
A.~Tarski.
\newblock Contributions to the theory of models, {I, II}.
\newblock In {\em Proc.\ Konink.\ Nederl.\ Akad.\ van Wetensch.}, volume 57 (=
  Indag.\ Math.\ 16) of {\em A}, pages 572--581 and 582--588 resp., 1954.

\bibitem{TG87}
A.~Tarski and S.~Givant.
\newblock {\em A formalization of set theory without variables}.
\newblock Number~41 in Colloquium Publications. Amer.\ Math.\ Soc., Providence,
  Rhode Island, 1987.

\bibitem{V97:atom}
Y.~Venema.
\newblock Atom structures.
\newblock In M.~Kracht, M.~D. Rijke, H.~Wansing, and M.~Zakharyaschev, editors,
  {\em Advances in Modal Logic '96}, pages 291--305. CSLI Publications,
  Stanford, 1997.

\end{thebibliography}
 \def\www{/\allowbreak}

\end{document}